\documentclass{amsart}

\usepackage[T1]{fontenc}
\usepackage{lmodern}
\usepackage{amsmath,amssymb,amsthm}
\usepackage[hidelinks]{hyperref}

\newcommand{\Z}{\mathbb Z}
\newcommand{\C}{\mathbb C}
\newcommand{\R}{\mathbb R}

\newcommand{\ii}{\mathrm i}
\newcommand{\floor}[1]{\left\lfloor #1\right\rfloor}
\newcommand{\conj}[1]{\overline{#1}}

\DeclareMathOperator{\rank}{rank}

\theoremstyle{plain}
\newtheorem{thm}{Theorem}[section]
\newtheorem{prop}[thm]{Proposition}

\theoremstyle{definition}

\theoremstyle{remark}

\title{The rank of the modular multiplication matrix}

\author{Haojia Shi}
\address{School of Mathematical Sciences, Peking University}
\email{2500010808@stu.pku.edu.cn}

\subjclass[2020]{15A03, 11A07, 11T23}
\keywords{modular multiplication matrix, matrix rank, finite Fourier transform, Dirichlet characters}
\date{}
\hypersetup{
  pdftitle={The rank of the modular multiplication matrix},
  pdfauthor={Haojia Shi},
  pdfsubject={Matrix rank and modular multiplication matrices},
  pdfkeywords={modular multiplication matrix, matrix rank, finite Fourier transform, Dirichlet characters}
}

\begin{document}

\begin{abstract}
We prove Bueno et al.'s conjectural formula for the rank of the modular multiplication matrix for every positive integer. The proof first applies the additive Fourier transform to split the row space by exact additive character order, and then applies the multiplicative character decomposition on unit groups to count the nonzero components. The same formula holds after deleting the zero row and zero column.
\end{abstract}

\maketitle

\section{Introduction}\label{sec:introduction}

The modular multiplication matrix records the least nonnegative residues of products modulo an integer. Bueno et al. studied the kernel of this matrix in the prime case and stated the general rank formula as Conjecture~16 in the published version of \cite{Bueno_Furtado_Karkoska_Mayfield_Samalis_Telatovich_2016}; the same statement appears as Conjecture~14 in the 2015 preprint. We prove this conjecture for all positive integers.

For a positive integer $n$, let $C_n$ be the real $n$ by $n$ matrix with
\[
        C_n(i,j)\equiv ij \pmod n,\qquad 0\le C_n(i,j)\le n-1,
\]
where $1\le i,j\le n$. Let $H_n$ be the matrix obtained from $C_n$ by deleting the last row and the last column. Let $k(n)$ denote the number of proper positive divisors of $n$.

\begin{thm}[Rank of the modular multiplication matrix]\label{thm:main}
For every positive integer $n$,
\[
        \rank_{\R}(C_n)=\rank_{\R}(H_n)=\floor{\frac{n-1}{2}}+k(n).
\]
\end{thm}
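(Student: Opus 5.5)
Work over $\C$, since the rank of a real matrix does not change under extension of scalars. Index rows and columns by $\Z/n\Z$, so that $i=n$ becomes $i=0$. The last row and last column of $C_n$ are identically zero, so $\rank C_n=\rank H_n$ is immediate, and it remains to compute $\rank C_n$ as the dimension of the row space $W$. Put $g_m(x)\in\{0,\dots,m-1\}$ for the residue of $x$ modulo $m$. If $\gcd(i,n)=d$ and $m=n/d$, then row $i$ is the function $j\mapsto d\,g_m(uj)$ for some unit $u$ modulo $m$. Hence
\[
W=\operatorname{span}\{\,j\mapsto g_m(uj)\;:\; m\mid n,\ m>1,\ u\in(\Z/m\Z)^\times\,\}.
\]
The space $W$ is invariant under the action $(\sigma_v h)(j)=h(vj)$ of $G=(\Z/n\Z)^\times$. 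So $W=\bigoplus_\chi W_\chi$ over the characters $\chi$ of $G$, with $W_\chi$ the image of the projector $e_\chi=\frac1{|G|}\sum_v\conj{\chi(v)}\sigma_v$. For each $m$, the orbit of $g_m$ contributes to $W_\chi$ only the single vector $e_\chi g_m$, and this vector vanishes unless $\chi$ factors through $(\Z/m\Z)^\times$. Therefore
\[
\dim W_\chi\le\#\{m\mid n:\ m>1,\ \mathrm{cond}(\chi)\mid m\}.
\]

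I will show that this bound is attained for $\chi$ trivial and for $\chi$ odd, and that $W_\chi=0$ for $\chi$ even and nontrivial. Granting this, the count is as follows. The trivial character gives $k(n)$, since $k(n)$ is the number of divisors $m>1$ of $n$. Summing the bound over odd $\chi$ and exchanging the order of summation gives
\[
\sum_{m\mid n,\ m\ge3}\tfrac{\varphi(m)}2=\tfrac12\bigl(n-1-[2\mid n]\bigr)=\floor{\tfrac{n-1}2}.
\]
This is exactly the formula in Theorem~\ref{thm:main}.

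The vanishing for even nontrivial $\chi$ comes from the identity $g_m(j)+g_m(-j)=m-m\,[m\mid j]$. Together with the constant function and the indicator of $m\mid j$, this identity places the even part of every $g_m$ in a span that is invariant under $G$ and killed by $e_\chi$. The reason is that constants and functions of $\gcd(j,n)$ are $G$-invariant, so $e_\chi$ annihilates them whenever $\chi$ is nontrivial. On the other hand, $e_\chi$ annihilates odd functions whenever $\chi$ is even.

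For the lower bound, use the additive Fourier transform in $j$. The vector $e_\chi g_m$ is supported on frequencies $b$ whose additive order divides $m$. Order the divisors by divisibility and look at the component of each $e_\chi g_m$ on frequencies of exact order $m$. A triangularity argument then shows that the vectors $e_\chi g_m$, for $\mathrm{cond}(\chi)\mid m$, are linearly independent, provided each of these top components is nonzero.

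That nonvanishing is the main obstacle. Since $\hat g_m(b)=-m/(1-\zeta_m^{b})$ for $b\not\equiv0$, the top coefficient is, up to a nonzero factor,
\[
S(\chi,m)=\sum_{b\in(\Z/m\Z)^\times}\frac{\chi(b)}{1-\zeta_m^{b}}.
\]
For trivial $\chi$, pairing $b$ with $-b$ and using $\frac1{1-z}+\frac1{1-\conj z}=1$ for $|z|=1$ gives $S=\varphi(m)/2\ne0$. For odd $\chi$, write $\frac1{1-\zeta^b}=\frac12+\frac{\ii}2\cot(\pi b/m)$. Then $S$ is a multiple of $\sum_b\chi(b)\cot(\pi b/m)$, which is a nonzero multiple of $L(1,\conj\chi)$ times a Gauss-sum factor when $\chi$ is primitive modulo $m$. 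When $\chi$ is induced from conductor $f<m$, I would first try to expand by Möbius inversion over the divisors between $f$ and $m$. This reduces $S$ to $L(1,\chi_f)\,\tau(\chi_f)$ times a product $\prod_{p\mid m}(1-\conj{\chi_f}(p)\cdot\text{const})$-type Euler factor, and I would check that factor is nonzero. Failing that, I would instead use the generalized Bernoulli number $B_{1,\chi}\ne0$ for odd $\chi$ directly on the residue function $g_m$.
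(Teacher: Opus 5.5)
Your reduction is sound, and it is organized differently from the paper. You first split $W$ into isotypic pieces $W_\chi=\operatorname{span}_m e_\chi g_m$ under $(\Z/n\Z)^\times$. You then kill even nontrivial $\chi$ with the parity identity, and you use the additive Fourier transform only for a triangularity argument inside each $W_\chi$. The paper instead splits first by exact additive order $E_q$ and counts characters of $U_q$ inside each layer. Your upper bound, the even vanishing, the triangularity and the final count are all correct, and your $S(\chi,m)$ is exactly the paper's $A(m,\chi)$.

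The gap is the nonvanishing of $S(\chi,m)$ for odd $\chi$ when $m$ is a proper multiple of $f=\mathrm{cond}(\chi)$. Your lower bound needs it for every such $m\mid n$, and this case is not marginal: already for $n=6$, the odd character of conductor $3$ must contribute at $m=6$. You only say that a M\"obius expansion gives some factor $\prod_p(1-\conj{\chi_f}(p)\cdot\mathrm{const})$ and that you would check it is nonzero. Whether such a factor can vanish depends entirely on the constant. If the constant had modulus $1$, the factor would vanish whenever $\chi_f(p)=1$.

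Your fallback falls into exactly this trap. Pairing $g_m$ with $\chi$ on the units gives
$\sum_{a\in U_m}\chi(a)a=m\,B_{1,\chi_f}\prod_{p\mid m,\ p\nmid f}(1-\chi_f(p))$,
and $B_{1,\chi_f}\ne 0$ does not stop this from being $0$. Take $n=m=21$ and $\chi_f$ the odd character modulo $3$. Then $\chi_f(7)=1$, so $e_\chi g_{21}$ vanishes at every $j$ prime to $21$, yet its top Fourier coefficient is nonzero.

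What is missing is the paper's conductor reduction from the proof of Proposition~\ref{prop:characters}. M\"obius inversion over units alone does not produce it. The extra ingredient is the evaluation of the full sum at each intermediate level. For $Q=fR$, group $b\equiv c\pmod f$ and use $\sum_{j=0}^{R-1}(1-\alpha\eta^j)^{-1}=R/(1-\alpha^R)$, with $\alpha=\exp(2\pi\ii c/Q)$ and $\eta=\exp(2\pi\ii/R)$. This gives $\sum_{b=1}^{Q-1}\chi_f(b)/(1-\exp(2\pi\ii b/Q))=\frac{Q}{f}S(\chi_f,f)$. Then apply inclusion--exclusion over the primes $p\mid m$ with $p\nmid f$, writing $b=Db'$. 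This yields $S(\chi,m)=\frac{m}{f}S(\chi_f,f)\prod_{p}(1-\chi_f(p)/p)$. The $1/p$ comes from the level dropping from $m$ to $m/D$, and each factor has modulus at least $1/2$. With this lemma supplied, together with the primitive case you already cite, your argument is complete.
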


The proof has two transforms. Section~\ref{sec:additive} computes the additive Fourier transform of each nonzero row and separates the row space by exact additive character order. Section~\ref{sec:characters} computes which multiplicative character components survive in each unit-group layer. Section~\ref{sec:rank} combines the layers and proves Theorem~\ref{thm:main}.

\section{Additive Fourier transform}\label{sec:additive}

In this section, we compute the additive Fourier transform of each nonzero row of $C_n$.

\begin{prop}[Additive Fourier formula for the rows]\label{prop:additive}
Let $n$ be a positive integer. Index the rows and columns of $C_n$ by the residue classes in $\Z/n\Z$, and let $C_n(a,b)$ be the least nonnegative residue of $ab$ modulo $n$. For every divisor $m$ of $n$ with $m>1$, put $d=n/m$ and $U_m=(\Z/m\Z)^\times$. For $u\in U_m$, let $R_{m,u}$ be the row vector
\[
        R_{m,u}(b)=C_n(du,b).
\]
Let $\zeta_m=\exp(2\pi \ii/m)$, and for a row vector $R$ define its unnormalized additive Fourier coefficient by
\[
        \widehat R(t)=\sum_{b\in \Z/n\Z}R(b)\exp(-2\pi \ii tb/n),
        \qquad t\in \Z/n\Z.
\]
Then the nonzero rows of $C_n$ are exactly the vectors $R_{m,u}$, with $m>1$ dividing $n$ and $u\in U_m$. Moreover $\widehat R_{m,u}(t)=0$ unless $t=ds$ for some $s\in \Z/m\Z$. If $t=ds$, then
\[
        \widehat R_{m,u}(t)=d^2m(m-1)/2
\]
for $s=0$, and
\[
        \widehat R_{m,u}(t)=
        -\frac{d^2m}{1-\zeta_m^{-su^{-1}}}
\]
for $s\ne 0$ in $\Z/m\Z$, where $u^{-1}$ is the inverse of $u$ modulo $m$.
\end{prop}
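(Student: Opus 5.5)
Every residue $a\in\Z/n\Z$ has a unique decomposition as follows. Put $g=\gcd(a,n)$, $d=g$, $m=n/g$. Then $a=du$ with $u$ determined modulo $m$ and $\gcd(u,m)=1$. The row indexed by $a$ is identically zero exactly when $n\mid ab$ for all $b$, that is, when $a=0$, equivalently $m=1$. So the nonzero rows are indexed by the pairs $(m,u)$ with $m>1$ dividing $n$ and $u\in U_m$. Distinct pairs give distinct residues $du$, so this is a bijection onto the nonzero residues.

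For the Fourier coefficient I will first reduce to a periodic sum. Since $dub \bmod n = d\cdot(ub \bmod m)$, we get $R_{m,u}(b)=d\,r(ub\bmod m)$, where $r(x)\in\{0,\dots,m-1\}$ is the least residue. This depends only on $b \bmod m$. Write $b=b_0+mk$ with $0\le b_0<m$ and $0\le k<d$. Then
\[
\widehat R_{m,u}(t)=d\sum_{b_0}r(ub_0)e^{-2\pi\ii tb_0/n}\sum_{k=0}^{d-1}e^{-2\pi\ii tk/d}.
\]
The inner geometric sum equals $d$ when $d\mid t$ and $0$ otherwise. This proves the vanishing claim.

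Now take $t=ds$, so that $e^{-2\pi\ii tb_0/n}=\zeta_m^{-sb_0}$. Substitute $x=ub_0$, which permutes $\Z/m\Z$ because $u$ is a unit. This gives
\[
\widehat R_{m,u}(ds)=d^2\sum_{x=0}^{m-1}x\,\zeta_m^{-su^{-1}x}.
\]
For $s=0$ the sum is $m(m-1)/2$, as claimed.

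For $s\neq0$, put $w=\zeta_m^{-su^{-1}}$. Since $su^{-1}\not\equiv0 \pmod m$, we have $w\ne1$ and $w^m=1$. The identity to use is $\sum_{x=0}^{m-1}xw^x=-m/(1-w)$ for an $m$-th root of unity $w\ne1$. I will derive it by multiplying $S=\sum xw^x$ by $(1-w)$ and telescoping:
\[
(1-w)S=\sum_{x=1}^{m-1}w^x-(m-1)w^m=(-1)-(m-1)=-m.
\]
This yields the formula. The only delicate points are the bookkeeping that $r(ub)$ depends on $b$ only modulo $m$ and the change of variables $x=ub_0$ with the inverse $u^{-1}$. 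I expect no real obstacle; the main care is tracking the two factors of $d$.
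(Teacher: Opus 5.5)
Your proposal is correct and follows the paper's proof essentially step for step: the same decomposition $a=du$ via $\gcd(a,n)$, the same splitting $b=b_0+mk$ with the inner geometric sum forcing $d\mid t$, and the same substitution $x=ub_0$ using $u^{-1}$ modulo $m$. The only difference is minor: you get $\sum_{x=0}^{m-1}xw^x=-m/(1-w)$ by telescoping $(1-w)S$, whereas the paper differentiates the finite geometric series and then uses $q^m=1$.
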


\begin{proof}
Let $d=n/m$. If $a$ is a nonzero residue class modulo $n$, set $d_a=\gcd(a,n)$, $m_a=n/d_a$, and let $u_a$ be the class of $a/d_a$ modulo $m_a$. Then $u_a$ is a unit modulo $m_a$ and $a=d_au_a$ in $\Z/n\Z$. This gives a unique pair $(m_a,u_a)$ with $m_a>1$ and $u_a\in U_{m_a}$. Conversely every pair $(m,u)$ with $m>1$ and $u\in U_m$ gives the nonzero row index $du$ modulo $n$. Thus the nonzero rows are exactly the vectors $R_{m,u}$.

Fix $m$ and $u$, and put $d=n/m$. For $b\in \Z/n\Z$, write $x$ for the image of $b$ in $\Z/m\Z$. Since $dub$ is divisible by $d$ modulo $n$, the least nonnegative residue of $dub$ modulo $n$ is $d$ times the least nonnegative residue of $ux$ modulo $m$. Hence
\[
        R_{m,u}(b)=d\,r_m(ux),
\]
where $r_m(y)$ denotes the representative of $y$ in $\{0,\ldots,m-1\}$.

For $t\in \Z/n\Z$, write $b=x+km$ with $x\in\{0,\ldots,m-1\}$ and $k\in\{0,\ldots,d-1\}$. Then
\[
\widehat R_{m,u}(t)
=
d\sum_{x=0}^{m-1} r_m(ux)\exp(-2\pi \ii tx/n)
 \sum_{k=0}^{d-1}\exp(-2\pi \ii tkm/n).
\]
The inner geometric sum is
\[
        \sum_{k=0}^{d-1}\exp(-2\pi \ii tk/d).
\]
It is $0$ unless $t$ is divisible by $d$ in $\Z/n\Z$, and it is $d$ when $t=ds$ for a unique $s\in \Z/m\Z$. Therefore $\widehat R_{m,u}(t)=0$ unless $t=ds$. In the remaining case,
\[
        \widehat R_{m,u}(ds)
        =
        d^2\sum_{x=0}^{m-1}r_m(ux)\zeta_m^{-sx}.
\]
If $s=0$, multiplication by $u$ permutes $\Z/m\Z$, so the sum is
\[
        0+1+\cdots+(m-1)=m(m-1)/2.
\]
If $s$ is nonzero, substitute $y=ux$. Then
\[
        \sum_{x=0}^{m-1}r_m(ux)\zeta_m^{-sx}
        =
        \sum_{y=0}^{m-1}y(\zeta_m^{-su^{-1}})^y.
\]
The number $q=\zeta_m^{-su^{-1}}$ satisfies $q^m=1$ and $q\ne 1$. Differentiating
\[
        \sum_{y=0}^{m-1}q^y=\frac{1-q^m}{1-q}
\]
and multiplying by $q$ gives
\[
        \sum_{y=0}^{m-1}yq^y=-\frac{m}{1-q}.
\]
This gives the asserted formula.
\end{proof}

\section{Character decomposition on unit groups}\label{sec:characters}

The next proposition determines the multiplicative character coefficients of the finite residue Fourier kernel.

\begin{prop}[Multiplicative character coefficients]\label{prop:characters}
Let $q$ be an integer with $q\ge 2$, let $U_q=(\Z/q\Z)^\times$, let $\zeta_q=\exp(2\pi \ii/q)$, and let $\chi$ be a complex multiplicative character of $U_q$. Define
\[
        A(q,\chi)=\sum_{a\in U_q}\frac{\chi(a)}{1-\zeta_q^a}.
\]
Then $A(q,\chi)$ is nonzero if $\chi$ is the principal character, $A(q,\chi)=0$ if $\chi$ is nonprincipal and $\chi(-1)=1$, and $A(q,\chi)$ is nonzero if $\chi(-1)=-1$.
\end{prop}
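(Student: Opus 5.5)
The key identity is
\[
\frac{1}{1-\zeta_q^a}=\frac12+\frac{\ii}{2}\cot\left(\frac{\pi a}{q}\right),
\]
which splits $A(q,\chi)$ into an even part and an odd part. Since $\cot$ is odd, $a\mapsto -a$ permutes $U_q$, and $\chi(-a)=\chi(-1)\chi(a)$, we get
\[
A(q,\chi)=\frac12\sum_{a\in U_q}\chi(a)+\frac{\ii}{2}\sum_{a\in U_q}\chi(a)\cot\left(\frac{\pi a}{q}\right).
\]
If $\chi(-1)=1$, the cotangent sum vanishes: pair $a$ with $-a$. If $\chi(-1)=-1$, the first sum vanishes, because $\chi$ is nonprincipal. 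For $q=2$ the group $U_2$ is trivial and $A=1/2$, so assume $q\ge 3$, where $a\ne -a$. The three cases then reduce as follows. For principal $\chi$, $A=\varphi(q)/2\ne 0$. For nonprincipal even $\chi$, orthogonality gives $A=0$. For odd $\chi$, it remains to show that
\[
S(\chi)=\sum_{a\in U_q}\chi(a)\cot\left(\frac{\pi a}{q}\right)\ne 0 .
\]

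I expect this last step to be the main obstacle; it is essentially the nonvanishing of $L(1,\chi)$ for odd characters. My plan is to relate $S(\chi)$ to $L(1,\bar\chi)$, or to a generalized Bernoulli number $B_{1,\chi}$, via the primitive character $\chi^*$ modulo the conductor $f$ that induces $\chi$.

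For the primitive case, I would use the finite Fourier expansion of $\cot$, namely the identity
\[
\sum_{b=1}^{q-1} b\,\zeta_q^{ab}=-\frac{q}{1-\zeta_q^a},
\]
already derived in the proof of Proposition~\ref{prop:additive}. Inserting it gives
\[
A(q,\chi)=-\frac1q\sum_{b}b\sum_{a}\chi(a)\zeta_q^{ab}.
\]
For primitive $\chi$ the inner Gauss sum equals $\bar\chi(b)\tau(\chi)$ with $\tau(\chi)\ne 0$, so
\[
A(q,\chi)=-\frac{\tau(\chi)}{q}\sum_{b}b\,\bar\chi(b)=-\tau(\chi)\,B_{1,\bar\chi}.
\]
The classical formula $L(1,\bar\chi)=\pi\ii\,\tau(\bar\chi)B_{1,\chi}/q$ (up to the standard constant) for odd $\chi$, together with the theorem that $L(1,\psi)\ne 0$ for nonprincipal $\psi$, then gives $B_{1,\bar\chi}\ne 0$. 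Hence $A\ne 0$.

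For imprimitive odd $\chi$ with conductor $f\mid q$, I would write $U_q$ in terms of $U_f$. I would compute $\sum_{a\in U_q,\ a\equiv c \bmod f}1/(1-\zeta_q^a)$ using Möbius inversion over divisors, with the identity $\sum_{j=0}^{k-1}1/(1-\zeta\,\omega^j)=k/(1-\zeta^k)$ for $\omega$ a primitive $k$-th root of unity. This should yield
\[
A(q,\chi)=A(f,\chi^*)\cdot\prod_{p\mid q,\,p\nmid f}\bigl(1-\chi^*(p)\bigr)\cdot(\text{nonzero factor}).
\]
Each factor $1-\chi^*(p)$ is nonzero only if $\chi^*(p)\ne 1$, and that can fail. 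So the imprimitive bookkeeping needs the exact shape of the Euler-type factor. I would first check it against small cases such as $q=2f$. Should a vanishing factor actually occur, I would revisit the claim, or seek an alternative argument via $L(1,\bar\chi)$ with its Euler factors $1-\bar\chi^*(p)/p$, which never vanish.
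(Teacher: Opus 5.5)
Your even case and your primitive odd case are the paper's argument in slightly different form. The cotangent split is equivalent to the paper's pairing identity $\frac{1}{1-z}+\frac{1}{1-z^{-1}}=1$. For primitive $\chi$ you use the same finite expansion of $1/(1-z)$, the Gauss-sum separation, the Washington formula and $L(1,\cdot)\ne 0$.

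The gap is the imprimitive odd case. You do not carry it out, and the Euler factor $1-\chi^*(p)$ you anticipate is wrong. So as written the proposal does not cover odd imprimitive characters, which occur for many moduli (already $q=6$) and are needed in the rank count.

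The M\"obius reduction you sketch is exactly the paper's route, and it works once the bookkeeping is done. Let $D$ run over squarefree products of primes $p\mid q$ with $p\nmid f$. Writing $a=Db$, the $D$-term becomes $\mu(D)\chi^*(D)\sum_{b=1}^{q/D-1}\chi^*(b)/(1-\zeta_{q/D}^{b})$. Each class $c\in U_f$ has $k=q/(Df)$ representatives in $[1,q/D)$, so your identity $\sum_{j=0}^{k-1}1/(1-\zeta\omega^j)=k/(1-\zeta^k)$ turns the inner sum into $\frac{q}{Df}A(f,\chi^*)$. The length of the progression depends on $D$, and that $1/D$ is what you dropped. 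The correct result is
\[
A(q,\chi)=\frac{q}{f}\,A(f,\chi^*)\prod_{p\mid q,\ p\nmid f}\Bigl(1-\frac{\chi^*(p)}{p}\Bigr),
\]
and no factor vanishes, since $|\chi^*(p)/p|\le 1/2$. Your factor would genuinely fail. Take $q=14$ and $\chi^*$ the Legendre symbol modulo $7$, which is odd with $\chi^*(2)=1$. Your formula predicts $A(14,\chi)=0$, whereas in fact $A(14,\chi)=A(7,\chi^*)\ne 0$.

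Your $L$-function fallback also works, and more cleanly than you suggest. Let $\chi$ be any odd character modulo $q$, primitive or not, and put $\widehat\chi(b)=\sum_{a}\chi(a)\zeta_q^{-ab}$. Expand $\chi(n)=\frac1q\sum_b\widehat\chi(b)\zeta_q^{bn}$ and use $\sum_{n\ge1}z^n/n=-\log(1-z)$. The $\log|1-\zeta_q^b|$ terms cancel because $\widehat\chi$ is odd, and $\sum_{b=1}^{q-1}\widehat\chi(b)=0$, leaving $L(1,\chi)=-\frac{\pi\ii}{q^2}\sum_{b=1}^{q-1}b\,\widehat\chi(b)$. Your own expansion gives $A(q,\chi)=-\frac1q\sum_b b\,\widehat\chi(-b)=\frac1q\sum_b b\,\widehat\chi(b)$. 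Comparing the two yields $A(q,\chi)=\frac{\ii q}{\pi}L(1,\chi)$. Dirichlet's theorem for the nonprincipal character $\chi$ modulo $q$ then finishes the proof, with no conductor reduction and no Gauss sums. Either the corrected Euler-factor computation or this identity must be supplied to complete the proof.
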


\begin{proof}
Suppose first that $\chi(-1)=1$. The identity
\[
        \frac{1}{1-z}+\frac{1}{1-z^{-1}}=1
\]
holds for $z\ne 1$. If $q=2$, the group $U_q$ has one element and its only character is principal, so $A(q,\chi)=1/2$. If $q>2$, no unit $a$ satisfies $a=-a$ in $\Z/q\Z$, so the units split into pairs $\{a,-a\}$. Since $\zeta_q^a\ne 1$ for $a\in U_q$, pairing $a$ with $-a$ gives
\[
        A(q,\chi)=\frac{1}{2}\sum_{a\in U_q}\chi(a).
\]
If $\chi$ is principal, this is $\varphi(q)/2$ and is nonzero. If $\chi$ is nonprincipal, choose $g\in U_q$ with $\chi(g)\ne 1$. Multiplication by $g$ permutes $U_q$, so
\[
        \sum_{a\in U_q}\chi(a)
        =
        \sum_{a\in U_q}\chi(ga)
        =
        \chi(g)\sum_{a\in U_q}\chi(a),
\]
and hence the sum is $0$. This proves the principal and nonprincipal even cases.

Now assume $\chi(-1)=-1$. Let $f$ be the conductor of $\chi$, and let $\psi$ be the primitive Dirichlet character modulo $f$ that induces $\chi$. Then $\psi(-1)=-1$. We reduce $A(q,\chi)$ to the primitive modulus $f$.

For every multiple $Q$ of $f$, define
\[
        S(Q)=\sum_{b=1}^{Q-1}
        \frac{\psi(b)}{1-\exp(2\pi \ii b/Q)},
\]
where $\psi(b)$ is evaluated modulo $f$ and is $0$ when $b$ is not coprime to $f$. Write $Q=fR$. For each $c\in\{1,\ldots,f-1\}$ with $\gcd(c,f)=1$, the terms $b=c+fj$, $0\le j\le R-1$, contribute
\[
        \psi(c)\sum_{j=0}^{R-1}\frac{1}{1-\alpha\eta^j},
\]
where $\alpha=\exp(2\pi \ii c/Q)$ and $\eta=\exp(2\pi \ii/R)$. Since
\[
        \alpha^R=\exp(2\pi \ii c/f)\ne 1,
\]
the root-of-unity identity
\[
        \sum_{j=0}^{R-1}\frac{1}{1-\alpha\eta^j}
        =
        \frac{R}{1-\alpha^R}
\]
applies. Summing over $c$ gives
\[
        S(Q)=\frac{Q}{f}A(f,\psi).
\]

The sum $A(q,\chi)$ is obtained from $S(q)$ by also imposing coprimality to the primes $p$ that divide $q$ but do not divide $f$. Let $P$ be this finite set of primes. Inclusion-exclusion gives
\[
        A(q,\chi)
        =
        \sum_D\mu(D)
        \sum_{\substack{1\le a\le q-1\\ D\mid a}}
        \frac{\psi(a)}{1-\zeta_q^a},
\]
where $D$ ranges over squarefree products of primes in $P$. For such $D$, the integer $D$ is coprime to $f$. Writing $a=Db$ transforms the inner sum into $\psi(D)S(q/D)$. Therefore
\[
        A(q,\chi)
        =
        \frac{q}{f}A(f,\psi)\sum_D\frac{\mu(D)\psi(D)}{D}
        =
        \frac{q}{f}A(f,\psi)\prod_{p\in P}\left(1-\frac{\psi(p)}{p}\right).
\]
Each factor is nonzero because $|\psi(p)|\le 1$ and $p\ge 2$. Thus $A(q,\chi)$ is nonzero once $A(f,\psi)$ is nonzero.

It remains to prove $A(f,\psi)$ is nonzero. The finite identity
\[
        \frac{1}{1-z}=-\frac{1}{f}\sum_{r=1}^{f-1}rz^r
\]
is valid whenever $z^f=1$ and $z\ne 1$. Applying it with $z=\zeta_f^a$ gives
\[
        A(f,\psi)
        =
        -\frac{1}{f}
        \sum_{r=1}^{f-1}r\sum_{a=1}^{f}\psi(a)\zeta_f^{ar}.
\]
For a primitive Dirichlet character, the inner Gauss sum equals $\conj{\psi}(r)\tau(\psi)$, where
\[
        \tau(\psi)=\sum_{a=1}^{f}\psi(a)\zeta_f^a
\]
and $\conj{\psi}(r)=0$ when $r$ is not coprime to $f$. Hence
\[
        A(f,\psi)
        =
        -\frac{\tau(\psi)}{f}
        \sum_{r=1}^{f} r\,\conj{\psi}(r).
\]
For an odd primitive Dirichlet character $\psi$ of conductor $f$, the Washington formula recorded in \cite[Theorem~1.1]{HKL19_Theorem_1_1} gives
\[
        L(1,\psi)=
        \frac{\pi \ii\,\tau(\psi)}{f^2}
        \sum_{r=1}^{f}\conj{\psi}(r)r.
\]
We use the convention $L(1,\psi)$ for Dirichlet $L$-functions; Kahn writes the same value as $L(\psi,1)$. Dirichlet's nonvanishing theorem, in Kahn's convention \cite[Theorem~4.2.11]{Kahn15_Theorem_4_2_11}, says that $L(\psi,1)\ne 0$ for every nontrivial Dirichlet character $\psi$. Since an odd primitive character is nontrivial, $L(1,\psi)$ is nonzero in our convention. The displayed formula then implies that
\[
        \tau(\psi)\sum_{r=1}^{f}r\,\conj{\psi}(r)
\]
is nonzero. Therefore $A(f,\psi)$ is nonzero, and the conductor reduction above gives $A(q,\chi)\ne 0$.
\end{proof}

\section{Rank count}\label{sec:rank}

We now combine the additive exact-order decomposition with Proposition~\ref{prop:characters}.

\begin{proof}[Proof of Theorem~\ref{thm:main}]
For $n=1$, the matrix $C_1$ is the $1$ by $1$ zero matrix, $H_1$ is the empty $0$ by $0$ matrix, $k(1)=0$, and the formula gives $0$. Assume $n\ge 2$.

Permute rows and columns so that they are indexed by $\Z/n\Z$, with the original index $n$ corresponding to $0$. This does not change rank. The row and column indexed by $0$ are zero, so deleting them does not change rank. Hence
\[
        \rank_{\R}(C_n)=\rank_{\R}(H_n).
\]
Since $C_n$ has real entries, its rank over $\R$ equals the rank over $\C$ of the same matrix. We compute the complex row rank after the invertible additive Fourier transform on the column variable $b\in \Z/n\Z$.

For every divisor $q$ of $n$, let $E_q$ be the coordinate subspace in Fourier variables supported on the additive characters of exact order $q$. Thus $E_1$ is the zero-frequency line, and for $q>1$ the coordinates of $E_q$ are indexed by
\[
        X_q=\{(n/q)s\in \Z/n\Z\mid s\in U_q\},
        \qquad U_q=(\Z/q\Z)^\times.
\]
The nonzero Fourier coordinates are the direct sum of $E_q$ over divisors $q>1$ of $n$.

By Proposition~\ref{prop:additive}, every nonzero row is $R_{m,u}$ for a unique divisor $m>1$ of $n$ and $u\in U_m$, and its Fourier transform is supported only on $E_q$ with $q$ dividing $m$. More explicitly, if $q$ divides $m$ and $t=(n/q)s$ with $s\in U_q$, then the $E_q$-coordinate of the Fourier transform of $R_{m,u}$ is a nonzero scalar, depending on $n,m,q$ but not on $s$, times
\[
        h_q(s\,\overline u^{-1}),
        \qquad
        h_q(x)=\frac{1}{1-\zeta_q^{-x}},
\]
where $\overline u$ is the image of $u$ in $U_q$. If $q$ does not divide $m$, the $E_q$ projection is zero. This follows from Proposition~\ref{prop:additive} by writing the frequency $t$ also as
\[
        t=(n/m)((m/q)s),
\]
so that
\[
        \zeta_m^{-(m/q)su^{-1}}=\zeta_q^{-s\overline u^{-1}}.
\]

We next compute the dimension of the span of the multiplicative translates of $h_q$ on $U_q$. The multiplicative characters of the finite abelian group $U_q$ form a basis for all complex-valued functions on $U_q$. For a function $h$ on $U_q$, the span of the right translates
\[
        x\mapsto h(xv^{-1}),\qquad v\in U_q,
\]
has dimension equal to the number of multiplicative characters whose Fourier coefficient in $h$ is nonzero: writing $h$ as a sum of character components, translation multiplies each character component by a scalar depending on $v$, and character orthogonality isolates exactly the nonzero components. For
\[
        h_q(x)=\frac{1}{1-\zeta_q^{-x}},
\]
the coefficient of a character $\chi$ is, up to the nonzero scalar $\chi(-1)$, the sum $A(q,\conj{\chi})$ from Proposition~\ref{prop:characters}. Therefore Proposition~\ref{prop:characters} implies that the translate span of $h_q$ has dimension
\[
        b(q)=1+o(q),
\]
where $o(q)$ is the number of characters $\chi$ of $U_q$ with $\chi(-1)=-1$. The one additional character is the principal character; the nonprincipal even characters contribute zero.

This gives the upper bound. Every row has entry $0$ in the column $b=0$. Therefore any linear combination of rows that has only zero additive Fourier frequency is a constant vector whose value at $b=0$ is $0$, hence is the zero vector. Thus projection of the row space to the direct sum of the nonzero exact-order spaces $E_q$, $q>1$, is injective. For each fixed $q>1$, the preceding paragraph shows that the projection of all rows to $E_q$ has dimension at most $b(q)$. Hence
\[
        \rank_{\C}(C_n)\le \sum_{\substack{q\mid n\\ q>1}} b(q).
\]

For the lower bound, list the divisors $m>1$ of $n$ in increasing numerical order and let $W_j$ be the span of the rows $R_{m,u}$ for the first $j$ listed divisors $m$ and all $u\in U_m$. When the divisor $m$ is first added, all previously listed divisors are smaller than $m$ and therefore none is divisible by $m$. By Proposition~\ref{prop:additive}, the previous rows have zero projection to $E_m$. The rows $R_{m,u}$ with $u\in U_m$ have $E_m$ projections equal to nonzero scalar multiples of the translates of $h_m$, and those translates span a space of dimension $b(m)$ by the preceding character calculation. Therefore adding the layer $m$ increases the dimension by at least $b(m)$. Induction over the ordered divisors gives
\[
        \rank_{\C}(C_n)\ge \sum_{\substack{m\mid n\\ m>1}} b(m).
\]
The upper and lower bounds agree, so
\[
        \rank_{\C}(C_n)=\sum_{\substack{q\mid n\\ q>1}} b(q).
\]

It remains to evaluate this divisor sum. For $q=2$, the element $-1$ equals $1$ in $U_2$, so $o(2)=0$. For $q>2$, the element $-1$ is a nonidentity element of $U_q$. Evaluation at $-1$ is then a nontrivial homomorphism from the character group of $U_q$ to $\{1,-1\}$, so exactly half of the $\varphi(q)$ characters are odd. Hence
\[
        o(q)=\varphi(q)/2
        \qquad (q>2).
\]
Consequently
\[
        \sum_{\substack{q\mid n\\ q>1}} b(q)
        =
        \#\{q:q\mid n,\ q>1\}
        +
        \sum_{\substack{q\mid n\\ q>1}}o(q).
\]
The first term is $k(n)$, because $q>1$ corresponds to the proper divisor $n/q$ of $n$. For the second term, use the identity
\[
        \sum_{q\mid n}\varphi(q)=n.
\]
If $n$ is odd, the divisors $q>1$ are all greater than $2$, so
\[
        \sum_{\substack{q\mid n\\ q>1}}o(q)
        =
        \frac{1}{2}\sum_{\substack{q\mid n\\ q>1}}\varphi(q)
        =
        \frac{n-1}{2}.
\]
If $n$ is even, the $q=2$ divisor contributes no odd characters, so
\[
        \sum_{\substack{q\mid n\\ q>1}}o(q)
        =
        \frac{1}{2}\sum_{\substack{q\mid n\\ q>2}}\varphi(q)
        =
        \frac{n-2}{2}.
\]
In both cases this is $\floor{(n-1)/2}$. Therefore
\[
        \rank_{\R}(C_n)=\rank_{\R}(H_n)=\floor{\frac{n-1}{2}}+k(n),
\]
as claimed.
\end{proof}

\section*{Acknowledgements}
The author used Danus for drafting and editing assistance. The author checked the mathematical arguments, citations, and final text.

\end{document}